\newtheorem*{thm}{Theorem}
\newtheorem{prop}{Proposition}
 \theoremstyle{remark}
\begin{document}
\title[Irrationality of $\mathbf{\zeta(3)}$]{A simplification of Ap\'ery's proof of the irrationality of $\zeta(3)$}
\author{Krishnan Rajkumar}
\thanks{Institute of Mathematical Sciences,
Chennai, India \\
\textit{E-mail address}: krishnan.rjkmr@gmail.com}
\begin{abstract}
 A simplification of Ap\'ery's proof of the irrationality of $\zeta(3)$ is presented. The construction
of approximations is motivated from the viewpoint of $2$-dimensional recurrence relations which simplifies
 many of the details of the proof.
 Conclusive evidence is also presented that these constructions arise from a continued fraction due to 
Ramanujan. 
\end{abstract}
\maketitle
\section{Introduction}

 In 1978, R. Ap\'ery \cite{Ap1} presented his famous proof of the irrationality of $\zeta(3)$.
 His method involved the explicit construction of two solutions $a_n$ and $b_n$ of the recurrence
\begin{equation}\label{Ap-rec}
(n+1)^3 u_{n+1} = (34 n^3 + 51 n^2 + 27 n + 5) u_n - n^3 u_{n-1},
\end{equation}
for $n \geq 1$ such that $a_n/b_n \rightarrow \zeta(3)$ as $n\rightarrow \infty$.
These solutions also satisfied the arithmetic properties $b_n, [1,2,\ldots n]^3 a_n \in 
\mathbb{Z}$. Put together, these facts turned out to be sufficient to complete the proof of irrationality of
 $\zeta(3)$. For an account of the history and the ``miraculous'' nature of this construction,
 see van der Poorten \cite{vdP}.

\medskip

Many proofs of the irrationality of $\zeta(3)$ have followed since then, all of which construct
the same sequences, $a_n$ and $b_n$, by vastly different methods (see Fischler \cite{Fis} for a survey).
One of these proofs is by Ap\'ery himself \cite{Ap2} (arguably his only complete proof of this
result). This paper deals with a method of interpolation for continued fractions and constructs a series of continued
fractions for $\zeta(3)$ from which the sequences $a_n$ and $b_n$ are obtained. We also remark that none
 of these proofs have generalisations to higher zeta values. For example, it is still unknown whether 
$\zeta(5)$ is irrational. 

\medskip

In Vol. 2 of Ramanujan's notebooks, Berndt \cite{Berndt} suggests that a certain continued
fraction of Ramanujan is related to the proof in \cite{Ap2}. Recall 
that the Hurwitz zeta function, $\zeta(s,x)$ is defined for $\textrm{Re } s >1, \textrm{Re } x >0$ by
\begin{equation*}
 \zeta(s,x) = \sum_{n=0}^{\infty} \frac{1}{(n+x)^s}\cdot
\end{equation*}
The continued fraction of Ramanujan in consideration (\cite{Berndt}, Entry 32(iii), p. 153)  is
\begin{equation}\label{cont-fr}
 \zeta(3,x+1) = \frac{1}{P(0,x)+}\, \frac{-1^6}{P(1,x)+} \, \frac{-2^6}{P(2,x)+}
\, \frac{-3^6}{P(3,x)+}\cdots
\end{equation}
for $\textrm{Re } x >-\frac{1}{2}$ where $P(n,x) = n^3 + (n+1)^3 + (4n+2)x(x+1)$. In the discussion following
this entry in \cite{Berndt}, it is stated that the specialisation $x=1$ yields a continued
fraction for $\zeta(3)$ which is ``of crucial importance'' in the work of Ap\'ery \cite{Ap2}.  

\medskip

Around the same time, F. Ap\'ery \cite{Ap3} in a biographical note on his father R. Ap\'ery, states 
that the construction in \cite{Ap1} is motivated from a ``number table due to Ramanujan''.
 
\medskip

In this note, we take the view that a more detailed analysis of the method in \cite{Ap2}
leads one to the conclusion that the constructions are indeed 
based on Ramanujan's continued fraction \eqref{cont-fr}. We present here such an analysis, which 
also allows us to give a simplified proof of Ap\'ery's result, which we state as 
\begin{thm}
 $\zeta(3)$ is irrational.
\end{thm}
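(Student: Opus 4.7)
The plan is to follow Ap\'ery's overall strategy: exhibit two rational sequences $a_n$ and $b_n$, both solutions of the recurrence \eqref{Ap-rec}, satisfying three properties. First, $b_n \in \Z$ and $d_n^3 a_n \in \Z$, where $d_n = \mathrm{lcm}(1,2,\ldots,n)$. Second, $a_n/b_n \to \zeta(3)$. Third, the combination $b_n \zeta(3) - a_n$ decays exponentially, at rate $\alpha^{-n}$ with $\alpha = (1+\sqrt{2})^4 = 17+12\sqrt{2}$. Granting these, the irrationality argument is routine: assuming $\zeta(3) = p/q$, the quantities $q d_n^3 (b_n \zeta(3) - a_n)$ form a sequence of eventually nonzero integers (integers by the arithmetic property, nonzero because $a_n/b_n \neq \zeta(3)$) tending to $0$. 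The prime number theorem gives $d_n = e^{n+o(n)}$, so $d_n^3 \alpha^{-n} \to 0$ is equivalent to $e^3 < \alpha$, which holds numerically as $20.09\ldots < 33.97\ldots$.

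The sequences are to emerge from Ramanujan's continued fraction \eqref{cont-fr} at $x=1$, whose convergents $A_n/B_n$ satisfy the simpler three-term recurrence $u_n = P(n,1) u_{n-1} - n^6 u_{n-2}$ with $P(n,1) = 2n^3 + 3n^2 + 11n + 5$, and converge to $\zeta(3,2) = \zeta(3) - 1$. Ap\'ery's recurrence \eqref{Ap-rec} should then arise from a contraction or linear combination of the $A_n$, $B_n$, with the polynomial coefficient $34 n^3 + 51 n^2 + 27 n + 5$ falling out of the algebra. The ``$2$-dimensional recurrence'' viewpoint advertised in the introduction is presumably designed to make this bookkeeping transparent and to yield closed-form expressions for $a_n$ and $b_n$. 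The characteristic equation $t^2 - 34 t + 1 = 0$ has roots $\alpha$ and $\alpha^{-1}$, so generic solutions behave like $C_1 \alpha^n + C_2 \alpha^{-n}$; the growing $b_n$ is $\sim C \alpha^n$, while the specific combination $b_n \zeta(3) - a_n$, with the right initial data, has $C_1 = 0$ and hence decays as $\alpha^{-n}$, a Casoratian identity pinning down the implied constants.

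The main obstacle I anticipate is the arithmetic property $d_n^3 a_n \in \Z$, the celebrated ``miracle'' in Ap\'ery's original proof: $a_n$ is a sum of rationals whose individual denominators need not divide $d_n^3$, and a subtle cancellation across terms is required. I would address this by unwinding the continued-fraction construction into an explicit double-sum closed form for $a_n$, and verifying the divisibility term by term using standard bounds on denominators of expressions such as $\sum_{k=1}^{n} 1/k^3$ and $\sum_{k=1}^{n} (-1)^{k-1}/\bigl(k^3 \binom{n}{k} \binom{n+k}{k}\bigr)$. The companion property $b_n \in \Z$ is immediate from the integer coefficients of \eqref{Ap-rec} together with integer initial conditions, once those initial values are checked.
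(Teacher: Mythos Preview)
Your overall strategy is sound and classical, but there is a real gap in the arithmetic step. You write that ``$b_n \in \Z$ is immediate from the integer coefficients of \eqref{Ap-rec} together with integer initial conditions.'' It is not: the leading coefficient in \eqref{Ap-rec} is $(n+1)^3$, so from integer $u_n,u_{n-1}$ one only obtains $u_{n+1}\in \tfrac{1}{(n+1)^3}\Z$. Proving that the right-hand side is always divisible by $(n+1)^3$ is precisely one of the ``miracles'' in Ap\'ery's original argument, on the same footing as $d_n^3 a_n\in\Z$; neither is a free consequence of the recurrence. If you intend to rescue this via the explicit closed form $b_n=\sum_k \binom{n}{k}^2\binom{n+k}{k}^2$, then you must also \emph{prove} that this sum satisfies \eqref{Ap-rec}, which is itself a nontrivial combinatorial identity (Zeilberger's algorithm, or Cohen's telescoping trick). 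Your plan for $d_n^3 a_n\in\Z$ via the double sum with the $(-1)^{k-1}/\bigl(k^3\binom{n}{k}\binom{n+k}{k}\bigr)$ term is workable but requires a $p$-adic valuation argument that you have not sketched; this is the step van der Poorten calls delicate.

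By contrast, the paper bypasses closed forms entirely. It builds two-indexed tables $p_{i,j},q_{i,j}$ from the $2\times 2$ matrix recurrence \eqref{rec1}, and the integrality of $q_{i,j}$ and of $d_{\max(i,j)}^3 p_{i,j}$ is obtained in Proposition~\ref{prop2} by a short gcd argument exploiting \eqref{cond1} and the homogeneity of $f,g$: one shows each $u_{i,j}$ is a $\Z$-linear combination of its three neighbours $u_{i-1,j},u_{i,j-1},u_{i-1,j-1}$, and then recurses down to the boundary values. The diagonal sequences $p_{n,n},q_{n,n}$ are your $a_n,b_n$, and the decay of $q_{n,n}\zeta(3)-p_{n,n}$ is read off from the vector form of Poincar\'e--Perron (Proposition~\ref{prop-P}) applied to a $2\times 2$ system with limit matrix having eigenvalues $17\pm 12\sqrt{2}$, with Proposition~\ref{prop3} ruling out the large eigenvalue. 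So the paper's route trades your closed-form combinatorics for a structural argument in which both arithmetic properties come out simultaneously and elementarily; your route is the more traditional Ap\'ery--van der Poorten line, but as written it underestimates the work needed for $b_n\in\Z$.
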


This note is organised as follows. We first present the constructions of \cite{Ap2} from the viewpoint
of $2$-dimensional recurrence relations in Sec. 2. Section 3 is devoted to the proof of the theorem. In Sec. 4,
 we end with some concluding remarks on the comparison with Ap\'ery's approach, the relation
 to the Ramanujan's continued 
fraction and generalisations to other constants.

\section{Construction of the tables}
We start by defining the homogenous polynomials
\begin{align}
 f(i,j) &= i^3 + 2 i^2 j + 2 i j^2 + j^3, \\
 g(i,j) &= i^3 - 2 i^2 j + 2 i j^2 - j^3. \nonumber
\end{align}
We identify here the key properties of these polynomials that will be used in 
the construction. They are
\begin{align}
 f(i,j) g(i,j) = f(i,0)&g(i,0) + f(0,j)g(0,j), \label{cond1}\\
 f(i+1,j)-f(i,j+1) &= g(i+1,j+1)-g(i,j). \label{cond2}
\end{align}

Now we present the $2$-dimensional recurrence which will play a central role in our construction.

\begin{prop}\label{prop1}
 The recurrence
\begin{equation}\label{rec1}
   \begin{pmatrix}
      f(i,j) & g(0,j) \\
      f(0,j) & g(i,j)
   \end{pmatrix}
   \begin{pmatrix}
      u_{i-1,j} \\
      u_{i-1,j-1}
   \end{pmatrix} 
= f(i,0) 
   \begin{pmatrix}
      u_{i,j} \\
      u_{i,j-1}
   \end{pmatrix}.
\end{equation}
for integers $i,j\geq 1$ has a rational valued solution $u_{i,j}$ for each of the following 
boundary conditions
\begin{itemize}
 \item[(a)] $\forall i,j\geq 0$, $u_{0,j}=u_{0,i}=1$,
\item[(b)]$u_{0,0}=0$ and $\forall i,j\geq 1$,
\begin{align*}
 u_{0,j} &= \sum_{n\leq j}\tfrac{1}{f(0,n)}\prod_{k<n}\tfrac{-g(0,k)}{f(0,k)} 
= \sum_{n\leq j} n^{-3}, \\
u_{i,0} &= \sum_{n\leq i}\tfrac{1}{f(n,0)}\prod_{k<n}\tfrac{g(k,0)}{f(k,0)} 
= \sum_{n\leq i} n^{-3}.
\end{align*}
\end{itemize}
\end{prop}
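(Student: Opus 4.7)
The plan is to read \eqref{rec1} one row at a time. I take its first row
\begin{equation*}
f(i,j)\,u_{i-1,j}+g(0,j)\,u_{i-1,j-1}=f(i,0)\,u_{i,j}
\end{equation*}
as the \emph{definition} of $u_{i,j}$ for $i,j\geq 1$, together with the prescribed boundary values $u_{0,j},\,u_{i,0}$. Since $f(i,0)=i^3\neq 0$ and the boundary data are rational, rationality is propagated immediately. The remaining content of the proposition is that the second row of \eqref{rec1},
\begin{equation*}
f(0,j)\,u_{i-1,j}+g(i,j)\,u_{i-1,j-1}=f(i,0)\,u_{i,j-1},
\end{equation*}
then holds automatically for every $i,j\geq 1$.

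For $j\geq 2$, substituting the defining equation for $u_{i,j-1}$ and using \eqref{cond2} in the form $f(i,j-1)-g(i,j)=f(i-1,j)-g(i-1,j-1)$ rewrites the second-row identity (with $k=i-1\geq 0$) as the three-term row recurrence
\begin{equation*}
f(0,j)\,u_{k,j}=[f(k,j)-g(k,j-1)]\,u_{k,j-1}+g(0,j-1)\,u_{k,j-2}. \qquad (R_k)
\end{equation*}
For $j=1$ the second-row identity becomes a direct compatibility condition between $u_{i,0},u_{i-1,0}$ and the first interior entry $u_{i-1,1}$. Thus the task splits into proving $(R_k)$ for every $k\geq 0,\,j\geq 2$, plus a short boundary check at $j=1$.

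I would prove $(R_k)$ by induction on $k$. For $k=0$ the boundary data makes $(R_0)$ a direct verification: in case (a) both sides reduce to $f(0,j)$, while in case (b), using $u_{0,j}-u_{0,j-1}=j^{-3}$ and $f(0,m)=m^3=-g(0,m)$, the identity collapses to $1=1$. For the inductive step, substitute the definition into each of $u_{k+1,j},\,u_{k+1,j-1},\,u_{k+1,j-2}$ in the candidate $(R_{k+1})$, clear $f(k+1,0)$, and collect coefficients of $u_{k,j},\,u_{k,j-1},\,u_{k,j-2},\,u_{k,j-3}$. Using $(R_k)$ at indices $j$ and $j-1$ to eliminate $u_{k,j}$ and a part of $u_{k,j-2}$, the coefficient of $u_{k,j-3}$ cancels instantly, and the surviving coefficient of $u_{k,j-1}$ reduces, after a short simplification via \eqref{cond2}, to $f(k+1,j-1)g(k+1,j-1)-f(k+1,j)g(k+1,j)$; by \eqref{cond1} this equals $j^6-(j-1)^6$, exactly cancelling the residual $f(0,j)^2-f(0,j-1)^2$ terms. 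The $j=1$ boundary check is an easier one-variable induction on $i$.

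The main obstacle is this coefficient calculation in the inductive step. Identity \eqref{cond1} is indispensable, since it supplies the telescoping $f(k+1,m)g(k+1,m)=(k+1)^6-m^6$ that makes the surviving coefficient vanish, while \eqref{cond2} is what makes $(R_k)$ applicable by aligning the linear coefficients after the substitution. Everything else reduces to explicit boundary computation and index bookkeeping.
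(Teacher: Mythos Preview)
Your strategy coincides with the paper's: both reduce \eqref{rec1} to the three-term row recurrence $(R_k)$ (this is exactly the paper's \eqref{rec2}/\eqref{rec4} after a shift of $j$), check it on row $0$ from the boundary data, propagate it by induction on the row index, and then treat the column $j=0$ compatibility separately (your ``$j=1$ boundary check'' unwinds to the same column recurrence the paper writes down at the end). The one substantive difference is how the inductive step is executed. You substitute the defining relation into $(R_{k+1})$ and cancel coefficients by hand; this works, and your description of how \eqref{cond1} and \eqref{cond2} enter is accurate. The paper instead observes that \eqref{cond1} makes the determinant of the matrix in \eqref{rec1} equal to $f(i,0)g(i,0)$, so \eqref{rec1} can be \emph{inverted} to give a companion recurrence \eqref{rec3} expressing row $i-1$ in terms of row $i$; comparing the two rows of \eqref{rec3} yields the row recurrence for row $i$ directly, and \eqref{cond2} then identifies it with $(R_{i-1})$. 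This replaces your four-term coefficient chase by a one-line determinant computation, which is worth knowing. A minor gap in your write-up: your substitution step uses the defining relation for $u_{k+1,j-2}$ and invokes $(R_k)$ at $j-1$, so as stated it only covers $j\geq 3$; the case $j=2$ needs a separate line feeding in the $j=1$ identity you prove elsewhere (or, equivalently, extend $(R_k)$ to $j=1$ with the convention $g(0,0)u_{k,-1}=0$).
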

\begin{proof}
 We first derive additional conditions that any solution of \eqref{rec1} has to satisfy. The top entry
 on the right in \eqref{rec1} for $i,j$ is the same as the bottom entry 
for $i,j+1$. Hence the solution has to satisfy the recurrence
\begin{equation}\label{rec2}
 f(0,j+1) u_{i-1,j+1} = (f(i,j)-g(i,j+1)) u_{i-1,j} + g(0,j) u_{i-1,j-1}.
\end{equation}
This is a condition on the solution for row $i-1$ with $i\geq 1$.
Next by inverting \eqref{rec1} and using property \eqref{cond1}, we get
\begin{equation}\label{rec3}
   \begin{pmatrix}
      g(i,j) & -g(0,j) \\
      -f(0,j) & f(i,j)
   \end{pmatrix}
   \begin{pmatrix}
      u_{i,j} \\
      u_{i,j-1}
   \end{pmatrix} 
= g(i,0) 
   \begin{pmatrix}
      u_{i-1,j} \\
      u_{i-1,j-1}
   \end{pmatrix}.
\end{equation}
This equation likewise leads to the condition that $u_{i,j}$ satisfies
\begin{equation}\label{rec4}
 f(0,j+1) u_{i,j+1} = (f(i,j+1)-g(i,j)) u_{i,j} + g(0,j) u_{i,j-1}.
\end{equation}
This is a condition on the solution for row $i$, which, by property \eqref{cond2}, 
is the same as \eqref{rec2} with $i-1$ replaced by $i$.

\medskip

Conversely, the recurrence \eqref{rec1} can be used to construct row $i$ from row $i-1$ in a 
well-defined manner, if 
 \eqref{rec2} is satisfied for row $i-1$. We will use this observation, inductively along the rows,
 to construct the required solutions.

\medskip

 First it can be easily verified that both of the given boundary conditions (a) and (b) 
satisfy \eqref{rec2} for the row $i=0$. This will be the base case. 

We now assume that we have constructed the solution upto row $i-1$ and 
that the row $i-1$ satisfies \eqref{rec2}. This implies that the recurrence \eqref{rec1}
can be used to construct row $i$ from row $i-1$ in a well-defined manner. Hence \eqref{rec3} holds
and by the discussion above, we can then conclude that row $i$ also satisfies
\eqref{rec2} and the induction step is complete.

\medskip

Hence there exist solutions to \eqref{rec1} satisfying the given boundary conditions (a) and (b) along the row 
$i=0$. The only step remaining is to verify that these solutions satisfy the respective boundary conditions
along the column $j=0$. This can be verfied by using \eqref{rec1} and \eqref{rec3} to get
\begin{equation*}
   \begin{pmatrix}
      f(i,j) & -g(i,0) \\
      f(i,0) & -g(i,j)
   \end{pmatrix}
   \begin{pmatrix}
      u_{i,j-1} \\
      u_{i-1,j-1}
   \end{pmatrix} 
= f(0,j) 
   \begin{pmatrix}
      u_{i,j} \\
      u_{i-1,j}
   \end{pmatrix}.
\end{equation*}
This gives us the condition
\begin{align*}
f(i+1,0) u_{i+1,j-1} = (f(i,j)+g(i+1,j))u_{i,j-1} - g(i,0) u_{i-1,j-1}. 
\end{align*}
Now it can be easily verified that the given boundary conditions are solutions of this for $j=1$
and the initial values $u_{1,0}$ and $u_{0,0}$ agree with our previous construction.
\end{proof}

Call the solutions to \eqref{rec1} corresponding to the initial conditions 
(a) and (b) of Prop. \ref{prop1}
as $q_{i,j}$ and $p_{i,j}$ respectively. Now we explore the arithmetic properties of 
these tables of rational numbers. Here we use an additional property of $f(i,j)$, namely
\begin{align}\label{cond3}
 f(0,x),f(x,0) \in \{ x^3, -x^3 \}.
\end{align}
We shall use the notation $d_n=[1,2,\ldots n]$ for the rest of this note.
\begin{prop}\label{prop2}
 Any solution $u$ of \eqref{rec1} has the property that 
 $u_{i,j}$ is a $\mathbb{Z}$-linear combination of $u_{i-1,j}$, $u_{i,j-1}$ and $u_{i-1,j-1}$ for $i,j\geq 1$.
Hence $q_{i,j}$ and $d_{\max(i,j)}^3 p_{i,j}$ are integers.  
\end{prop}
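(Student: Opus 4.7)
The crux is to produce an integer-coefficient recurrence
\[
u_{i,j} \;=\; \alpha\, u_{i-1,j} + \beta\, u_{i,j-1} + \gamma\, u_{i-1,j-1}, \qquad \alpha,\beta,\gamma \in \mathbb{Z},
\]
valid for every solution of \eqref{rec1}; the two arithmetic claims then follow by a straightforward induction. I would start by extracting two scalar consequences of \eqref{rec1} using property \eqref{cond3}:
\[
i^3\, u_{i,j} = f(i,j)\, u_{i-1,j} - j^3\, u_{i-1,j-1}, \qquad j^3\, u_{i,j} = f(i,j)\, u_{i,j-1} - i^3\, u_{i-1,j-1}.
\]
The first is simply the top row of \eqref{rec1}, and the second is the top row of the combined matrix identity displayed near the end of the proof of Proposition~\ref{prop1} (where the cancellation relies on \eqref{cond1}).

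Next, set $d = \gcd(i,j)$ and choose $a,b \in \mathbb{Z}$ with $a i^3 + b j^3 = d^3$ by Bezout. Taking $a$ times the first relation plus $b$ times the second gives
\[
d^3\, u_{i,j} \;=\; a f(i,j)\, u_{i-1,j} + b f(i,j)\, u_{i,j-1} - (a j^3 + b i^3)\, u_{i-1,j-1}.
\]
The arithmetic point is that $d^3$ divides every coefficient on the right: homogeneity of $f$ gives $d^3 \mid f(i,j)$, and $d^3 \mid i^3, j^3$ yields $d^3 \mid a j^3 + b i^3$. Dividing through produces the desired integer recurrence, with explicit $\alpha = a f(i,j)/d^3$, $\beta = b f(i,j)/d^3$, $\gamma = -(a j^3 + b i^3)/d^3$.

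For the two integrality conclusions I would induct on $i+j$. The base cases are $q_{i,0}=q_{0,j}=1 \in \mathbb{Z}$, together with $d_j^3 p_{0,j} = \sum_{n \le j}(d_j/n)^3 \in \mathbb{Z}$ and the symmetric statement for $p_{i,0}$. In the inductive step, integrality of $q_{i,j}$ is immediate. For $p$, I multiply the integer recurrence through by $d_{\max(i,j)}^3$ and observe that $\max(i-1,j), \max(i,j-1), \max(i-1,j-1) \le \max(i,j)$, so each of $d_{\max(i-1,j)}$, $d_{\max(i,j-1)}$, $d_{\max(i-1,j-1)}$ divides $d_{\max(i,j)}$; hence each predecessor term on the right is an integer multiple of an inductively integral quantity, and $d_{\max(i,j)}^3\, p_{i,j} \in \mathbb{Z}$.

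The main obstacle is the Bezout symmetrization in the second paragraph. A naive attempt using only the top row of \eqref{rec1} forces one to divide by $i^3$ and yields the non-integer coefficient $f(i,j)/i^3$; the role of the companion relation is precisely to allow one to replace $i^3$ on the left by $\gcd(i^3, j^3) = d^3$, which does divide $f(i,j)$ by homogeneity, and this is what makes the integrality go through.
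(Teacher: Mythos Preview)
Your argument is correct. The two scalar relations you extract are exactly the top rows of \eqref{rec1} and of the column recurrence displayed at the end of the proof of Proposition~\ref{prop1}; since the latter is obtained from \eqref{rec1} and its inverse \eqref{rec3} (which in turn rests on \eqref{cond1}), both hold for every solution of \eqref{rec1}. Your B\'ezout step is fine because $\gcd(i^3,j^3)=\gcd(i,j)^3=d^3$, and the divisibility $d^3\mid f(i,j)$ by homogeneity then lets you clear $d^3$ from every coefficient. The induction for the integrality of $q_{i,j}$ and $d_{\max(i,j)}^3 p_{i,j}$ is also in order.

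The paper proceeds somewhat differently: it first reduces to the coprime case $\gcd(i,j)=1$ via homogeneity, then works entirely inside the single matrix \eqref{rec1}, finding an integer $x$ with $f(i,j)\equiv x\,f(0,j)\pmod{f(i,0)}$ and using \eqref{cond1} to deduce $g(0,j)\equiv x\,g(i,j)\pmod{f(i,0)}$, so that subtracting $x$ times the second row from the first makes all coefficients divisible by $f(i,0)=i^3$. Your route is more symmetric in $i$ and $j$: by combining the row and column recurrences through B\'ezout you handle the general $\gcd$ in one stroke, without a separate coprime reduction. The paper's route, on the other hand, stays within \eqref{rec1} alone and makes the role of \eqref{cond1} more explicit as a congruence. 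Both use the same ingredients (\eqref{cond1}, \eqref{cond3}, homogeneity) and yield the same conclusion.
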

\begin{proof}
 For the proof of the first statement of the proposition, we start with the assumption that the gcd $(i,j)=1$.
 By \eqref{cond3}, this means that the gcd
$(f(0,j),f(i,0))=1$. Hence, there exists an integer $x$ such that
\begin{align*}
 f(i,j) \equiv x f(0,j) \mod f(i,0).
\end{align*}

Multiplying this by $g(i,j)$, we get
\begin{align*}
 f(i,j)g(i,j) &\equiv x f(0,j)g(i,j) \mod f(i,0).
\end{align*}

Using \eqref{cond1} makes the left side $\equiv f(0,j)g(0,j)$ and cancelling $f(0,j)$
 from both sides gives
\begin{align*}
 g(0,j) \equiv x g(i,j) \mod f(i,0).
\end{align*}

 Hence, by subtracting $x$ times the second row from the first row in \eqref{rec1}, we get 
coefficients which are divisible by $f(i,0)$. Thus we conclude that $u_{i,j}-x u_{i,j-1}$
is a $\mathbb{Z}$-linear combination of $u_{i-1,j}$ and $u_{i-1,j-1}$. Since $x$ is an integer,
we get the first part of the proposition, for the special case $(i,j)=1$.

\medskip

The general case $(i,j)=d$ is handled by calling $i'=i/d$, $j'=j/d$, dividing
\eqref{rec1} by $d^3$ and using the 
homogenity of $f(i,j)$ and $g(i,j)$ to get
\begin{equation*}
   \begin{pmatrix}
      f(i',j') & g(0,j') \\
      f(0,j') & g(i',j')
   \end{pmatrix}
   \begin{pmatrix}
      u_{i-1,j} \\
      u_{i-1,j-1}
   \end{pmatrix} 
= f(i',0) 
   \begin{pmatrix}
      u_{i,j} \\
      u_{i,j-1}
   \end{pmatrix}.
\end{equation*}
This reduces to the previous case as $(i',j')=1$ and we proceed as before and complete
the proof of the 
first statement of the proposition.

\medskip

For the second statement of the proposition, we use the first statement recursively to obtain that
 $u_{i,j}$ is a $\mathbb{Z}$-linear combination of 
$u_{0,0},u_{0,1},\ldots u_{0,j},u_{1,0},u_{2,0},\ldots u_{i,0}$.
Thus the second statement on
$q_{i,j}$ and $p_{i,j}$ follows from the arithmetic properties of the boundary values (a) and (b) in 
Prop. \ref{prop1}.
\end{proof}

Now, we will prove that $p_{i,j}/q_{i,j}$ converge to $\zeta(3)$ uniformly in $i$ and $j$. 
For simplicity of presentation we shall use one more proerty of $f,g$ namely
\begin{align}\label{cond4}
 f(i,j)-f(0,j) > g(i,j)-g(0,j), \qquad i,j\geq 1.
\end{align}

We define the table $\epsilon_{i,j}$ as
\begin{equation}\label{eps_def}
 \epsilon_{i,j}= q_{i,j} \zeta(3) - p_{i,j}.
\end{equation}
  and prove
\begin{prop}\label{prop3}
The table $\epsilon_{i,j} \rightarrow 0$ uniformly as $i,j \rightarrow \infty$. 
\end{prop}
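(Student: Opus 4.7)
The plan is to exploit three ingredients: that $\epsilon_{i,j}$ satisfies recurrence \eqref{rec1} by linearity, so it obeys the same two-dimensional dynamics as $q$ and $p$; a pair of Casoratian identities that turn $p_{i,j}/q_{i,j}$ into a telescoping sum with positive, explicitly computable increments; and monotonicity of $q_{i,j}$ in each variable, supplied by condition \eqref{cond4}, which converts the telescoping into a uniform upper bound on $\epsilon_{i,j}$.

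First I would establish the Casoratian identities
\[
q_{i,j+1} p_{i,j} - p_{i,j+1} q_{i,j} = -\frac{1}{(j+1)^3}, \qquad q_{i+1,j} p_{i,j} - p_{i+1,j} q_{i,j} = -\frac{1}{(i+1)^3}.
\]
Applying the three-term recurrence \eqref{rec4} to $p$ and to $q$, multiplying by $q_{i,j}$ and $p_{i,j}$ respectively, and subtracting yields $(j+1)^3 s_{i,j} = j^3 s_{i,j-1}$ for $s_{i,j} := q_{i,j+1} p_{i,j} - p_{i,j+1} q_{i,j}$. Iteration reduces the identity to $s_{i,0} = -1$, which follows from $s_{0,0} = -1$ (immediate from the boundary values) and a short induction on $i$ using \eqref{rec1} at $j=1$. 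Dividing by $q_{i,j}q_{i,j+1}$ gives
\[
\frac{p_{i,j+1}}{q_{i,j+1}} - \frac{p_{i,j}}{q_{i,j}} = \frac{1}{(j+1)^3 q_{i,j}q_{i,j+1}} > 0,
\]
and similarly in $i$, so $p_{i,j}/q_{i,j}$ is strictly increasing in each variable.

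Next I would use condition \eqref{cond4}, in the equivalent form $f(i,j+1) - g(i,j) \geq (j+1)^3 + j^3$ (a direct computation from the explicit forms of $f$ and $g$), to induct along \eqref{rec4} and conclude $q_{i,j+1} \geq q_{i,j} \geq 1$ for all $i,j$, with the quantitative bound $q_{i,j} \geq q_{1,j} = 2j^2 + 2j + 1$ when $i \geq 1$. With this, $L_i := \lim_{j \to \infty} p_{i,j}/q_{i,j}$ exists (the sequence is increasing in $j$ and bounded above by $\zeta(3) + \sum_{k \leq i} k^{-3}$ via the $i$-Casoratian), and the same $i$-Casoratian yields
\[
L_i - L_{i-1} = \lim_{j \to \infty} \frac{1}{i^3 q_{i-1,j}q_{i,j}} = 0,
\]
which together with $L_0 = \lim_j \sum_{n \leq j} n^{-3} = \zeta(3)$ gives $L_i = \zeta(3)$ for every $i$. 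This identification is the step I expect to be the main obstacle: one must pin down the limit without circular appeal to the claim being proved, and the vanishing-increment argument succeeds only because the growth $q_{i-1,j}q_{i,j} \to \infty$ has been secured in advance.

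With $\lim_j \epsilon_{i,j}/q_{i,j} = 0$ in hand, telescoping the $j$-Casoratian produces the nonnegative series
\[
\frac{\epsilon_{i,j}}{q_{i,j}} = \sum_{k \geq j} \frac{1}{(k+1)^3 q_{i,k} q_{i,k+1}}.
\]
Monotonicity of $q$ in $j$ gives $q_{i,j} \leq q_{i,k}$ for $k \geq j$, so
\[
\epsilon_{i,j} \leq \sum_{k \geq j} \frac{1}{(k+1)^3 q_{i,k+1}} \leq \sum_{n > j} \frac{1}{n^3}
\]
uniformly in $i$, and the symmetric argument in the $i$-direction gives $\epsilon_{i,j} \leq \sum_{n > i} n^{-3}$ uniformly in $j$. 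Combining, $\epsilon_{i,j} \leq \sum_{n > \max(i,j)} n^{-3} \to 0$ as $\max(i,j) \to \infty$, which is the required uniform convergence.
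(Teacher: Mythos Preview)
Your argument is correct and follows essentially the same route as the paper: compute the row and column Casoratians, use \eqref{cond4} to obtain monotonicity of $q_{i,j}$ in $j$, deduce that each row of $p_{i,j}/q_{i,j}$ has limit $\zeta(3)$, and then telescope the row Casoratian to bound $\epsilon_{i,j}$. The paper extracts both Casoratians simultaneously from the matrix recurrences \eqref{rec1} and \eqref{rec3} (rather than from \eqref{rec4} plus a separate base-case induction) and finishes with $|\epsilon_{i,j}|\le \zeta(3)/q_{i,j}$ instead of your $\epsilon_{i,j}\le \sum_{n>j} n^{-3}$, but these are only organisational differences.

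One small point: the inequality $q_{i,j}\ge q_{1,j}$ for $i\ge 1$, and more generally the ``symmetric argument in the $i$-direction'', are asserted but not justified; you have only proved monotonicity of $q$ in $j$. Both follow from the symmetry $q_{i,j}=q_{j,i}$ (a consequence of $f(i,j)=f(j,i)$, $g(i,j)=-g(j,i)$ and the symmetric boundary data), which you should state. Alternatively, for the step $L_i=L_{i-1}$ you only need $q_{i-1,j}q_{i,j}\to\infty$, and this already follows for each fixed $i\ge 1$ from the fact that $q_{i,j}$ is a nondecreasing integer sequence that cannot be eventually constant (since $P(j,i)>(j+1)^3+j^3$ in \eqref{rec4}); the $i$-direction bound is then not needed at all for the usual reading of ``$i,j\to\infty$''.
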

\begin{proof}
 First we note that condition \eqref{cond4} implies that for $x\geq 1$,
\begin{align*}
  f(i,j) x +g(0,j) > f(0,j)x + g(i,j).
\end{align*}
Using this in \eqref{rec1} with $x=q_{i-1,j}/q_{i-1,j-1}$, gives the implication
$q_{i-1,j} \geq q_{i-1,j-1} \Rightarrow q_{i,j} > q_{i,j-1}$. Hence, we conclude that
$q_{i,j}$ is monotonically increasing along rows $i\geq 1$.

Define the following quantities: $$\delta_{i,j}^{row}=p_{i-1,j}q_{i-1,j-1}-p_{i-1,j-1}q_{i-1,j}, \qquad 
\delta_{i,j}^{col}=p_{i,j-1}q_{i-1,j-1}-p_{i-1,j-1}q_{i,j-1}.$$

 Now, we take the second row of \eqref{rec1} for both $p_{i,j}$ and $q_{i,j}$,
 multiply by $q_{i-1,j-1}$ and $p_{i-1,j-1}$ resp. and subtract to get
\begin{align}\label{prop3-1}
   j^3 \delta_{i,j}^{row}=i^3 \delta_{i,j}^{col}
\end{align}
 Similarly using the second rows of \eqref{rec3} gives
\begin{align}\label{prop3-2}
   -j^3 \delta_{i+1,j}^{row}=-i^3 \delta_{i,j}^{col}
\end{align}
Using \eqref{prop3-1} and \eqref{prop3-2} recursively with the initial values
$\delta_{1,j}^{row}=j^{-3}$ (verified directly) gives us 
 $\delta_{i,j}^{row}= j^{-3}$ and $\delta_{i,j}^{col}= i^{-3}$ for $i,j\geq 1$.

Now, we deduce the difference in the ratios $r_{i,j}=p_{i,j}/q_{i,j}$ along the columns,
\begin{align*}
 r_{i,j}-r_{i-1,j} = \frac{\delta_{i,j+1}^{col}}{q_{i,j}q_{i-1,j}} = 
\frac{1}{i^3 q_{i,j}q_{i-1,j}},
\end{align*}
tends to $0$ for $i$ fixed and $j\rightarrow \infty$. This, coupled with the fact that 
$r_{0,j}\rightarrow
\zeta(3)$ (from the boundary conditions in Prop. 1), implies that each row in $r_{i,j}$ has $\zeta(3)$ as limit.

As for the difference in $r_{i,j}$ along the rows,
\begin{align*}
 r_{i,j}-r_{i,j-1} = \frac{\delta_{i+1,j}^{row}}{q_{i,j}q_{i,j-1}} = 
\frac{1}{j^3 q_{i,j}q_{i-1,j}}.
\end{align*}
 Hence for any $i,j$ we have
\begin{align*}
 |\zeta(3)-r_{i,j}| \leq \frac{1}{q_{i,j}^2} \sum_{k\geq j}\frac{1}{k^3}.
\end{align*}
Multiplying by $q_{i,j}$ on both sides, we get that $|\epsilon_{i,j}|\leq \zeta(3)/q_{i,j}$
which tends to zero uniformly in $i,j$ by montonically increasing integers $q_{i,j}$ (see Prop. 2).
\end{proof}

\section{Proof of the theorem}

For the proof of the theorem we shall use the criterion that a number $\alpha$ 
is irrational if there exists a sequence of integers $a_n$ and $b_n$ such that
\begin{align}\label{criterion}
 0 \neq |a_n - b_n \alpha| \rightarrow 0  \quad \textrm{as } n \rightarrow \infty 
\end{align}
For, if not, let $\alpha=r/s$ with coprime integers $r,s$. Then the modulus of the $\mathbb{Z}$-linear
form in $1$ and $\alpha$ in \eqref{criterion} is either $0$ or $\geq 1/s$ 
for any choice of integers $a_n$ and $b_n$. This contradicts \eqref{criterion} and hence $\alpha$ is irrational.

The linear forms in $1$ and $\zeta(3)$ needed to use the above criterion will come from
the diagonal $\epsilon_{n,n}$. For estimating the decay of these forms, we shall need
Poincar\'e's theorem in vector form. For a discussion of the history of this theorem
 and references, see Aptekarev et. al. \cite{Aptekarev}, Ch.3, Sec.1.
Poincar\'e's theorem is usually
used in the simpler setting of $1$-dimensional
 recurrence relations and the following (\cite{Aptekarev},
 pp.1104) is a generalisation.
\begin{prop}\label{prop-P} \emph{(Poincar\'e-Perron, in vector form)}
 Let $\underbar{x}^n=(x_1^n,x_2^n,\dots x_k^n)$ be a sequence of 
$k$-dimensional vectors which is a solution of 
\begin{align}\label{diff-eqn}
 \underbar{x}^n = A_n \underbar{x}^{n-1},
\end{align}
where the vectors are taken to be column vectors and $A_n$ is a $k \times k$ matrix. Let 
$A_n \rightarrow A$ as $n\rightarrow \infty$, where $A$ is a diagonalizable matrix with 
eigenvalues of distinct magnitude. Then, either $\underbar{x}^n=0$ eventually or there exists
a component $j$ of $\underbar{x}^n$ such that
\begin{align}\label{limit}
 \lim_{n\rightarrow \infty} \frac{x_j^n}{x_j^{n-1}} = \lambda,\quad \textrm{and}
\quad \lim_{n\rightarrow \infty} \frac{\underbar{x}^n}{x_j^n} = \underbar{e},
\end{align}
 where $(\lambda,\underbar{e})$ is an eigenpair of $A$.

If the system \eqref{diff-eqn} is nondegenerate ($A_n$ is nonsingular for all $n$), then
for any eigenpair $(\lambda,\underbar{e})$ of $A$, there exists a solution $\underbar{x}^n$
of \eqref{diff-eqn} and component $j$ such that \eqref{limit} holds.
 \end{prop}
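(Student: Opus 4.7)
The plan is to reduce to a diagonal limit matrix by conjugation and then exploit the spectral gap. Write $A = PDP^{-1}$ with $D = \operatorname{diag}(\lambda_1,\ldots,\lambda_k)$, where the eigenvalues are labelled so that $|\lambda_1| > |\lambda_2| > \cdots > |\lambda_k|$, and change variables via $\underbar{y}^n = P^{-1}\underbar{x}^n$. The transformed recurrence $\underbar{y}^n = B_n\underbar{y}^{n-1}$ with $B_n := P^{-1}A_n P$ satisfies $B_n \to D$, so the off-diagonal entries of $B_n$ eventually fall below any fixed fraction of the spectral gap $\delta := \min_i(|\lambda_i|-|\lambda_{i+1}|)$.

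For the first assertion I would fix a nonzero solution $\underbar{y}^n$ and, for each index $j$, monitor the rescaled quantity $M_j^n := \max_i|y_i^n|\cdot|\lambda_j|^{-n}$. Using the near-diagonal form of $B_n$, one shows by a Gronwall-type comparison that exactly one index $j$ is ``active'' in the sense that $M_j^n$ is bounded away from both $0$ and $\infty$; the remaining coordinates then decouple, giving $y_i^n/y_j^n \to 0$ for $i \neq j$ and $y_j^n/y_j^{n-1} \to \lambda_j$. Transporting back through $P$ supplies the claim for a suitable component of $\underbar{x}^n$.

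For the second assertion I would fix an eigenpair $(\lambda,\underbar{e})$ and construct a solution with the prescribed asymptotic by a fixed-point argument. The ansatz $\underbar{y}^n = \lambda^n(P^{-1}\underbar{e} + \underbar{\eta}^n)$ with $\underbar{\eta}^n\to 0$ converts the recurrence into an inhomogeneous linear equation for $\underbar{\eta}^n$ whose source is a small multiple of $B_n - D$. I would split the coordinates according to whether the associated eigenvalue has magnitude greater than or less than $|\lambda|$: the subdominant coordinates are iterated forward, the superdominant ones backward (using invertibility of $A_n$, so that $A_n^{-1}$ has eigenvalues $1/\lambda_i$ and the roles of large and small reverse). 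Each piece is a contraction on the Banach space of sequences decaying at a suitable intermediate geometric rate, and the common fixed point is the desired solution.

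The principal obstacle is the second half. Extracting a solution whose growth rate matches a non-dominant eigenvalue is delicate because generic solutions are overwhelmed by the largest-magnitude mode; the contraction argument succeeds only because the strict inequalities among the $|\lambda_i|$ leave a geometric rate that separates the forward and backward contractive regimes, and because nondegeneracy of $A_n$ is precisely what makes the backward iteration well-defined.
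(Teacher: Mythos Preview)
The paper does not prove this proposition: it is quoted as a known result, with a pointer to \cite{Aptekarev} for history and references, and is then applied as a black box. So there is no argument in the paper to compare your sketch against.

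Your outline is the standard route to Poincar\'e--Perron and is sound in its architecture: conjugate by $P$ to make the limit matrix diagonal, use the strict separation of the $|\lambda_i|$ to isolate a dominant coordinate, and for the existence half run the classical Perron construction by splitting coordinates into those iterated forward and those iterated backward (the latter requiring exactly the nondegeneracy hypothesis). One technical point needs adjustment. The claim that for the active index $j$ the rescaled size $M_j^n=\max_i|y_i^n|\cdot|\lambda_j|^{-n}$ remains bounded away from $0$ and $\infty$ is not true under the bare hypothesis $A_n\to A$: already in the scalar case $y^n=a_n y^{n-1}$ with $a_n\to\lambda$ one has $y^n/y^{n-1}\to\lambda$, yet $y^n/\lambda^n=\prod_{k\le n}(a_k/\lambda)\,y^0$ need not stay bounded unless $a_n-\lambda$ is summable. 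The usual fix is to work with \emph{ratios} rather than rescaled magnitudes---track $y_i^n/y_j^n$ and $y_j^n/y_j^{n-1}$ directly and show these converge---which yields exactly the conclusion \eqref{limit} without the spurious two-sided bound. This is a repair of mechanism, not of strategy.
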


We use Prop.\ref{prop-P} by defining $\underbar{x}^n = (\epsilon_{n,n+1},\epsilon_{n,n})$ and
\begin{align}\label{A_n}
 A_n =    \begin{pmatrix}
      \frac{6 n^3 +9 n^2 +5 n + 1}{(n+1)^3} & \frac{- n^3}{(n+1)^3} \\
      1 & 0
   \end{pmatrix}
   \begin{pmatrix}
      6 & -1 \\
      1 & 0
   \end{pmatrix}.
\end{align}

We note that the difference equation \eqref{diff-eqn} is satisfied. This is because, 
by the definition \eqref{eps_def} of $\epsilon_{i,j}$
 it is also a solution of the recurrence \eqref{rec1} of Prop. 1. The second matrix on the right in \eqref{A_n}
is from \eqref{rec1} with $i=n,j=n$ which transforms $(\epsilon_{n-1,n},\epsilon_{n-1,n-1})$
to $(\epsilon_{n,n},\epsilon_{n,n-1})$. The first matrix is from \eqref{rec2} with
$i=n+1,j=n$ which transforms $(\epsilon_{n,n},\epsilon_{n,n-1})$
to $(\epsilon_{n,n+1},\epsilon_{n,n})$ as required.  

\medskip

We have $A_n\rightarrow A$ where 
\begin{align*}
 A = \begin{pmatrix}
      35 & -6 \\
      6 & -1
   \end{pmatrix},
\end{align*}
with the eigenvalues of $A$ being $17 \pm 12 \sqrt{2}$ and the corresponding eigenvectors
being $(18 \pm 12 \sqrt{2}, 6)$.

Now, we apply Prop. \ref{prop-P} and note that the system is nondegenerate for $n\geq 1$. Hence, the 
eventually zero case cannot occur because the vector $(\epsilon_{1,2},\epsilon_{1,1})$
is not the zero vector. Thus, there exists $j$ and an eigenpair $(\lambda,\underbar{e})$
such that \eqref{limit} holds.

Note that in \eqref{limit}, we can always choose $j$ to be any of the choices where $\underbar{e}_j\neq 0$.
Since neither of the eigenvectors of $A$ has any
 zero entries, we can conclude that 
\eqref{limit} holds
for both values $j=1,2$. We choose $j=2$ and get
\begin{align}\label{eps_decay}
 |\epsilon_{n,n}| = e^{n(\log \lambda + o(1))},
\end{align}
 where $\lambda = 17 \pm 12 \sqrt{2}$. However, $\lambda  = 17 + 12 \sqrt{2} $ is not 
possible because that will contradict Prop. \ref{prop3}. Hence, 
$\lambda  = 17 - 12 \sqrt{2}$ in the decay estimate \eqref{eps_decay}.

Recall that the prime number theorem implies that $d_n = e^{n(1+o(1))}$ (\cite{vdP}, p.198). From 
Prop. \ref{prop2}, we obtain that $d_n^3 p_{n,n}$ and $q_{n,n}$ are integers.
Hence, the decay estimate for $d_n^3 \epsilon_{n,n}$ is
\begin{align}\label{final_decay}
 |d_n^3 q_{n,n} \zeta(3) - d_n^3 p_{n,n}|  = e^{n(3 + \log \lambda + o(1))}.
\end{align}

Since $\log( 17-12\sqrt{2}) < -3$, we conclude that
 the $\mathbb{Z}$-linear form in $1$ and $\zeta(3)$ in \eqref{final_decay} 
is nonzero and tends to $0$ as 
$n \rightarrow \infty$. By the criterion \eqref{criterion},
we conclude that $\zeta(3)$ is irrational.

\section{Concluding remarks}

\noindent
\textit{Comparison with Ap\'ery's approach:}

At this point, we would like to stress that the construction of the tables $p_{i,j}$ and $q_{i,j}$
is entirely from \cite{Ap2} and it is Ap\'ery's method that we have adapted for the proof of
Prop. 1. The only novelty in our approach is in viewing these tables as the
solutions of a $2$-dimensional recurrence, whereas in \cite{Ap2} they appear as a sequence of
 continued fractions (indexed by row).

\medskip

 The advantage for us is that the proof of the arithmetic properties
in Prop. 2 is natural and elementary, whereas in \cite{Ap2}, a proof
for $q_{i,j} \in \mathbb{Z}$ is given using differential equations and
 there is no indication of the proof for
 the properties of $p_{i,j}$. This is a reasonable achievement for our approach,
given that these properties are considered deep (``a fundamental miracle'' \cite{vdP}, p. 202).
 Proposition 3 is also an easy consequence of the recurrence \eqref{rec1}, while there is not even a
 mention of this fact in \cite{Ap2}.

\medskip

The last step in \cite{Ap2} is the claim that the ``diagonal'' sequences $p_{i,i}$ and $q_{i,i}$ 
satisfy the recurrence \eqref{Ap-rec} and hence the (usual) Poincar\'e's theorem gives 
the irrationality of $\zeta(3)$. We remark that his claim can easily be verified
from our recurrence relations. We have preferred to follow the vector based approach
because it is more natural and the vector form of the Poicare\'e's theorem is not more difficult
than the $1$-dimensional version (see \cite{Aptekarev}).

\medskip
\noindent
\textit{Relation to Ramanujan's continued fraction:}

Now we sketch a proof that Ap\'ery's approach of viewing each row as a continued
 fraction recovers the intimate connection with Ramanujan's continued fraction \eqref{cont-fr}.
 First we note that along each row where $i$ is fixed, $p_{i,j}$ and $q_{i,j}$ satisfy \eqref{rec2} 
(with $i-1$
replaced by $i$). 
Hence the recurrence satisfied by $j!^3 p_{i,j}$ and $j!^3 q_{i,j}$ is 
\begin{equation}
 u_{i,j+1} = P(j,i) u_{i,j} -j^6 u_{i,j-1},
\end{equation}
for $j\geq 1$. Here we used the fact that $f(i+1,j)-g(i+1,j+1) = P(j,i)$ where 
$P(j,i)=j^3+(j+1)^3 +(4j+2)i(i+1)$ is the same polynomial appearing in \eqref{cont-fr}.   

Hence if we define the continued fraction $\omega(i)$ by
\begin{align*}
 \omega(i) = \frac{-1^6}{P(1,i)+} \, \frac{-2^6}{P(2,i)+}
\, \frac{-3^6}{P(3,i)+}\cdots
\end{align*}
then from the theory of continued fractions, we can obtain
\begin{align}\label{ram-last-step}
 \zeta(3) = \frac{\omega(i) p_{i,0} + p_{i,1}}{\omega(i) q_{i,0} + q_{i,1}}.
\end{align}
We recall that $q_{i,0}=1$ and $p_{i,0} = \sum_{n\leq i}n^{-3}$. It is also easy to obtain,
using the methods of Sec. 2, that $q_{i,1}=P(0,i)$ and $p_{i,1}=P(0,i)p_{i,0}+1$. Putting all
these values in \eqref{ram-last-step}, we get exactly the identity of Ramanujan \eqref{cont-fr} 
specialized at $x=i$. 

Now, in principle, Carlson's theorem can be used to derive \eqref{cont-fr}
for the entire halfplane $\mathrm{Re}\,x>-1/2$ from the equality at all positive integers,
if we show that the growth of the continued fraction is sufficiently slow
(the Hurwitz zeta function is bounded in this range).

\medskip
 
Thus, we see that the point of view in \cite{Ap2} gives a clear link with
\eqref{cont-fr} and bolsters the claim that Ap\'ery's constructions were indeed motivated from
\eqref{cont-fr}. We must mention at this point that the idea of using \eqref{cont-fr} to produce 
such a startling diophantine application is a testimony of the genius of Ap\'ery.
 
\medskip  
\noindent
\textit{Generalisations:}

As a final remark, these methods can be used as in \cite{Ap2} to show that $\log 2$ and $\zeta(2)$ 
are irrational 
by suitable choices for $f(i,j)$ and $g(i,j)$. For $\log 2$, the choice $f(i,j)=i+j$, $g(i,j)=i-j$ 
satisfies
all the conditions specified in Sec. 2 and yields a proof of irrationality as in Sec. 3. This is 
indeed related to continued fraction Entry 29 (Cor.) of \cite{Berndt}. Similarly for $\zeta(2)$,
the choice $f(i,j) = i^2 + ij+ \tfrac{1}{2} j^2$, $g(i,j) = -i^2 + ij- \tfrac{1}{2} j^2$ yields a 
proof of irrationality and is related to Entry 30 (Cor.) of \cite{Berndt}. Note that \eqref{cond3} is 
not satisfied in this case, and more work is needed to  
establish the analogue of Prop. 2. 

\medskip

However, there exist no nontrivial homogenous
polynomials $f(i,j)$, $g(i,j)$ of degree $>3$ which satisfy the conditions \eqref{cond1} and \eqref{cond2}. Since these 
conditions are crucial in the construction, we conclude that our approach, in its current
 form, fails for higher zeta values. One may need to look beyond $2 \times 2$ matrix recurrence relations
to attack these constants!

\medskip
\noindent
\textit{Acknowledgements}:
 I wish to thank Professors R. Balsubramanian, M. R. Murty, Y. Nesterenko, T. Rivoal, K. Srinivas and
 M. Waldschmidt for their suggestions and encouragement during various stages of this project.

\end{document}